\documentclass{amsart}
\usepackage{graphicx} 
\usepackage{enumerate,amsthm,amsmath,amsfonts,amssymb,mathrsfs,hyperref,dutchcal,xcolor}
\usepackage[cal=cm]{mathalfa}
\usepackage{fullpage}
\newtheorem{thm}{Theorem}
\newtheorem{defi}{Definition}

\newtheorem{rmk}{Remark}

\title{Phonon gap analysis for equilibria of perturbed almost-periodic Frenkel-Kontorova models}
\author{Yujia An}
\address{School of Mathematical Sciences, Beijing Normal University,
No. 19, XinJieKouWai St., HaiDian District, Beijing 100875, P. R. China}
\email{yjan@mail.bnu.edu.cn}

\author{Xifeng Su}
\address{School of Mathematical Sciences, Laboratory of Mathematics and Complex Systems (Ministry of Education)\\
Beijing Normal University,
No. 19, XinJieKouWai St., HaiDian District, Beijing 100875, P. R. China}
\email{xfsu@bnu.edu.cn, billy3492@gmail.com}

\date{\today}

\begin{document}
\maketitle

\begin{abstract}
For generalized Frenkel-Kontorova models  subjected to almost-periodic media, employing both the KAM method and the approach of `anti-integrable' limits, two different types of equilibria
are obtained in \cite{an2024kamtheoryalmostperiodicequilibria} and \cite{du2024anti} respectively. 

We study the phonon gap around these equilibria  and we find that the KAM equilibria do not have a phonon gap but the equilibria obtained by anti-integrable limits do have.
\end{abstract}

\section{Introduction}
The celebrated Kolmogorov–Arnold–Moser (KAM for short) theory, which focuses on perturbations of integrable systems, and the other perturbation theory for fully chaotic systems far away from the integrable,
which is called `anti-integrable' limits, provide two different mechanisms to search for the equilibria of the standard Frenkel-Kontorova (FK for short) models on periodic media. See e.g. \cite{Rafaelbook} and references therein, and \cite{MR1067910, VeermanT91, Aubry91} for an introduction of KAM theory and anti-integrable limits respectively in the present context.

We point out that these equilibria with periodic potentials can be interpreted as orbits of some twist map on the annulus and so it is natural to study the hyperbolicity of them in a dynamical way. For instance, \cite{Goroff85} provides hyperbolic sets\footnote{Actually, these sets contain the equilibria derived from the idea of anti-integrable limits.} of twist maps corresponding to some specific examples and \cite{Mackay_1992} shows these sets can be hyperbolic cantori, which could be thought
of as remnants of KAM tori. 

As for the non-perturbative approaches, it is worthwhile to mention the celebrated Aubry–Mather theory developed independently by Aubry \cite{aubry1983} and Mather \cite{Mather82} in the early 1980s, which utilizes
variational methods to look for ground states. In 1988,  Le Calvez proved in \cite{LeCalvez88}, that for generic exact symplectic twist map, there exists an open dense subset of rotation numbers such that the corresponding Aubry-Mather set is hyperbolic. Later in \cite{Arnaud11}, Arnaud gives an equivalence condition for which the Aubry-Mather set is uniformly hyperbolic by the Green bundles tools.\footnote{See \cite{Arnaud16} for more discussions for Lyapunov exponents and Green bundles.}
The well understanding of the hyperbolicity of the Aubry-Mather may be helpful for some problems in \cite{Herman98}.

From another perspective in \cite{AUBRY1992123}, Aubry, Mackay, and Baesens observe that the notion of the uniform hyperbolicity for symplectic twist maps is closely related to that of the phonon gap.  More precisely, they show  that the set of orbits of the symplectic twist map being uniformly hyperbolic is equivalent to the associated sets of equilibrium states having a phonon gap. Note that  the phonon gap is an important concept in solid physics: 
an equilibrium configuration that exhibits a phonon gap implies the presence of an energy barrier, preventing it from transitioning to any other equilibrium state, thereby signifying a stable structure.

We remark that in the interpretation of deposition, the existence of quasi-periodic KAM solutions implies the existence of a continuum of equilibria (see for instance \cite{SuL17}), so that the system can slide, which may seem to be caused by zero phonon gap in the periodic case. (In contrast, if the KAM tori are not present, the system is pinned.) However, in the quasi-periodic FK models, \cite{DSZ16} shows the resonant quasi-periodic equilibria can be pinned even if the phonon gap is zero. Therefore, we think the phonon gap analysis would be an independent object to study with.

The main purpose of this note is to investigate whether the equilibrium configurations obtained through two different methods in the quasi-periodic or almost-periodic FK model possess a phonon gap. 
See Theorems~\ref{KAM equilibria} and \ref{anti-integrable limits}.
We show that the phonon gap vanishes in the equilibrium configurations obtained by the KAM method, while the equilibrium configurations obtained by the anti-integrable limit have a phonon gap. 

Let us recall that in the standard FK model, the configuration of the system is given by a sequence $\{u_n\}_{n\in\mathbb Z}$, where $u_n\in\mathbb R^d$ for each $n\in \mathbb{Z}$, and the formal energy of the sequence $\{u_n\}_{n\in\mathbb Z}$ is given by 
$$\Phi(u)=\sum_{n\in\mathbb Z} \left[ \frac12(u_n-u_{n+1})^2+V(u_n)\right].$$
We are interested in the equilibrium configurations where the derivatives of the formal energy with respect to the positions $u_n$ vanish.

There are several physical interpretations of the FK models. The original motivation \cite{FK39} (see also \cite{BK2004})  was dislocations in solids. 
In the interpretation of \cite{aubry1983}, $u_n$ denote the positions of a deposited material over a substratum. 
The interaction of the atoms with the substratum is modeled by the term $V$. The periodicity of $V$ considered in \cite{aubry1983} corresponds to a periodic substratum (e.g., a crystal).
 Such periodic models rely on the description of the orbits of some twist map by means of a generating function.
 

However, when $V$ is a quasi-periodic or an almost-periodic function\footnote{The associated models are called generalized FK models in the present paper.}, 
the corresponding system does not admit an easy dynamical formulation. 
Therefore, the phonon gap would be a good substitute candidate for hyperbolicity to analyze the behavior of the equilibria.
We remark that the quasi-periodic models considered in \cite{MR3023435} could appear in cleaved faces of crystals or in quasi crystals, and the phonon localization of the KAM equilibria and the equilibria obtained from anti-integrable limits is studied in \cite{DSZ16} and \cite{du2024anti} respectively.
For numerical explorations of these issues, one may refer e.g. \cite{vanErp'99a, vanErp'01}.

We point out that  extending the Aubry–Mather theory to quasi-periodic or almost-periodic media presents significant challenges, and we will not go in this direction of the non-perturbative problems in the present paper.
However, it would be  an interesting question to ask: whether there is an equivalence condition of phonon gap in the aperiodic and non-perturbative setting.

In this note, we mainly study the equilibria obtained by the KAM method \cite{an2024kamtheoryalmostperiodicequilibria} and  the anti-integrable limits \cite{du2024anti}. 
Both equilibria correspond to different potential $V$ subject to almost-periodic media. Indeed, we will show that
\begin{itemize}
\item [(i)]  the KAM method, for instance the potential $V$ is sufficiently small and the rotation number is Diophantine, there exists an equilibrium without a phonon gap. 
\item [(ii)] under the assumptions of the anti-integrable limits, for example the potential $V$ satisfies the Aubry criterion and is large enough\footnote{This equilibrium can indeed be found in any dimension.},
there exists an equilibrium with a phonon gap.
\end{itemize}

\begin{rmk}
We point out that the set of functions satisfying the Aubry criterion in \cite{du2024anti} is a rather big set in the sense that it contains periodic, quasi-periodic, almost-periodic and  pattern-equivariant functions\footnote{Some simple example could be found in e.g. \cite{GGP06}.}.
Hence, we actually provide a concise proof for the existence of equilibria with a phonon gap for rather general models of FK type in the context of the anti-integrable limits.

Note also that although we only prove the KAM equilibria without a phonon gap for the almost-periodic FK models similar arguments would work for periodic and quasi-periodic models. However, KAM approach for the FK models with pattern-equivariant potential is still unknown.
\end{rmk}

\section{The phonon gap}
In the generalized FK model, the energy of configuration $u=\{u_n\}_{n\in\mathbb Z}$, $u_n\in\mathbb R^d$ is given by $$\Phi(u)=\sum_{n}L(u_n,u_{n+1}),$$ where the Lagrangian $L:\mathbb R^d\times\mathbb R^d\to \mathbb R$ is a $C^2$ function, which is slightly more general than the one in the introduction. We collect several definitions below related to the notion of phonon gap, which are prepared for our later analysis.

\begin{defi}
    The configuration $u=\{u_n\}_{n\in\mathbb Z}$, $u_n\in\mathbb R^d$ is called equilibrium states if $D\Phi(u)=0$, that is, $D_1L(u_n,u_{n+1})+D_2L(u_{n-1},u_n)=0$, for all $n\in\mathbb Z$. 
\end{defi}

\begin{defi}
    Given a bounded linear operater $A:\ell^2(\mathbb Z)\to\ell^2(\mathbb Z)$, let $\sigma(A)$ denote its spectrum. We define the spectral maximum as $\sigma_{\max}(A):=\{|E|:E\in\sigma(A)\}$, and spectral minimum as $\sigma_{\min}(A):=\inf\{|E|:E\in\sigma(A)\}$. 
\end{defi}

\begin{defi}
    The phonon spectrum of an equilibrium state $u$ is $\sigma(D^2\Phi(u))$, the spectrum of the Hessian operator $D^2\Phi(u):\ell^2(\mathbb{Z}) \to \ell^2(\mathbb{Z})$,
    \begin{equation*}
    (D^2\Phi(u)\xi)_n = D_{21}L(u_{n-1},u_n)\xi_{n-1} + (D_{11}L(u_n,u_{n+1}) + D_{22}L(u_{n-1},u_n))\xi_n + D_{12}L(u_n,u_{n+1})\xi_{n+1},
    \end{equation*}
    for any $\xi = (\xi_n)_{n\in\mathbb{Z}} \in \ell^2(\mathbb{Z})$. 
\end{defi}

Notice that the operator $D^2\Phi(u)$ is a one-dimensional Schr\"odinger operator with a position dependent potential. 
The dependence will be given by the dynamics of the $u_n$. 
In particular, for the solutions given by a hull function, we will be considering quasi-periodic or even almost-periodic potentials.
The mathematical theory of the spectrum of quasi-periodic or almost-periodic Schr\"odinger operators is well
developed \cite{HaroL, PasturF92}. 

\begin{defi}
    The equilibrium state $u$ has a phonon gap if $0\notin\sigma(D^2\Phi(u))$, that is, $D^2\Phi(u):\ell^2(\mathbb{Z}) \to \ell^2(\mathbb{Z})$ is invertible.   
\end{defi}

\begin{defi}
    The gap parameter of an equilibrium state $u$ is the ratio $G(u):=\sigma_{\min}(D^2\Phi(u))/\sigma_{\max}(D^2\Phi(u))$.
\end{defi}

\begin{rmk}
    If the equilibrium state $u$ does not have a phonon gap, then $G(u)=0$. And if the equilibrium state $u$ has a phonon gap, then $\sigma_{\min}(D^2\Phi(u))=\sigma_{\max}(D^2\Phi(u)^{-1})^{-1}$ and $G(u)=\frac{1}{\sigma_{\max}(D^2\Phi(u)^{-1})\sigma_{\max}(D^2\Phi(u))}$.
\end{rmk}





\section{KAM equilibria without a phonon gap}
In this section we aim to prove that the equilibrium configuration obtained using the KAM method does not have a phonon gap.

In the one dimensional FK model, when the Lagrangian $L(x,y)$ is given by $L(x,y)=\frac{1}{2}(x-y)^2+V(x)$, $x,y\in\mathbb R$, the equilibrium configuration satisfies 
\begin{equation}\label{1}
    u_{n+1}+u_{n-1}-2u_n+V'(u_n)=0.
\end{equation}
When the potential $V$ is almost periodic function $V(\theta)=\widehat V(\theta\alpha)$, $\widehat V:\mathbb T^{\mathbb N}\to\mathbb R$, $\alpha\in\mathbb R^\mathbb N$ is rationally independent, given an $\omega\in\mathbb R_+$, one can find an equilibrium configuration $u_n=h^*(n\omega)$, where $h^*(\theta)=\theta+\tilde{h}^*(\theta)$, with $\tilde{h}^*(\theta)=\hat h^*(\theta\alpha)$, and $\hat h^*:\mathbb T^\mathbb N\to \mathbb R$.
See \cite{an2024kamtheoryalmostperiodicequilibria}.

Actually, the solution $\hat h^*$ satisfies 
\begin{equation*}
    \mathcal{E}[\hat h^*]=\hat h^*(\sigma+\omega\alpha)+\hat h^*(\sigma-\omega\alpha)-2\hat h^*(\sigma)+\partial_{\alpha}\widehat V(\sigma+\alpha\cdot\hat h^*(\sigma))=0,\quad \forall\sigma\in\mathbb T^\mathbb N.
\end{equation*}
Furthermore, for all $\beta\in\mathbb R$, define $\hat h^*_{\beta}(\sigma)=\hat h^*(\sigma+\beta\alpha)+\beta$, we have
\begin{equation}
    \mathcal{E}[\hat h^*_{\beta}]=\hat h^*(\sigma+\omega\alpha+\beta\alpha)+\beta+\hat h^*(\sigma-\omega\alpha+\beta\alpha)+\beta-2\hat h^*(\sigma+\beta\alpha)-2\beta+\partial_{\alpha}\widehat V(\sigma+\alpha\hat h^*(\sigma+\beta\alpha)+\alpha\beta)=0.
\end{equation}
Therefore, we get a family of solutions $u_n(\beta)=h^*_{\beta}(n\omega)=n\omega+\hat h^*(n\omega\alpha+\beta\alpha)+\beta$ that continuously depend on the parameter $\beta$. 

    In physical terms, this indicates that the obtained solutions can slide. That is, applying an arbitrarily small tilting force to all particles can transition them from one equilibrium state to another. However, an equilibrium state has a phonon gap means that there exists an energy barrier to move from that equilibrium state to another. Therefore, it can be inferred that the equilibrium configurations obtained using the KAM method do not possess a phonon gap. We will provide a rigorous mathematical proof. First, we present a simplified version of KAM theorem. The notation is the same as in \cite{an2024kamtheoryalmostperiodicequilibria}.

\begin{thm}[\cite{an2024kamtheoryalmostperiodicequilibria}]\label{sr}
Let $h(\theta)=\theta+\tilde h(\theta)$, $\tilde h(\theta)=\hat h(\alpha\theta)=\sum_{k\in\mathbb Z^\mathbb{N}_*}\hat h_ke^{ik\cdot\alpha\theta}$, $\hat h_0=0$, $\hat h\in\mathscr A_{\rho_0}^1$. $\alpha\in[0,1]^{\mathbb N}$ is rationally independent. $\partial_{\alpha}\widehat V\in\mathscr A_{\rho_0+\|\hat h\|_{\rho_0}+\iota}^2$, $\iota>0$. Denote $\hat l=1+\partial_\alpha\hat h$, $T_{x}(\sigma)=\sigma+x$. We assume the following: 
\begin{itemize}
\item[{\rm (H1)}]
Diophantine condition: $|\omega\alpha\cdot k-2n\pi|\ge\frac\nu{\prod_{j\in\mathbb N}(1+\langle\langle j\rangle\rangle^{1+\tau}|k_j|^{1+\tau})}$ holds for $\forall k\in\mathbb Z^\mathbb{N}_*\setminus\{0\}$, $\forall n\in\mathbb Z$, where $\tau,\nu>0$.
\item[{\rm (H2)}]
Non-degeneracy condition: $\|\hat l(\sigma)\|_{\rho_0}\le N^+$, $\|(\hat l(\sigma))^{-1}\|_{\rho_0}\le N^-$, $\big|\big<\frac1{\hat l\cdot\hat l\circ T_{-\omega\alpha}}\big>\big|\ge c>0$.
\end{itemize}
If $\|\mathcal E[\hat h]\|_{\rho_0}$ is small enough $(\|\mathcal E[\hat h]\|_{\rho_0}\le\epsilon\le\epsilon^*(N^+,N^-,c,\tau,\nu,\iota,\rho_0,\|\partial_{\alpha}\widehat V\|_{\mathscr A_{\rho_0+\|\hat h\|_{\rho_0}+\iota}^2})$, then there exists an analytic function $\hat h^*\in\mathscr A_{\frac{\rho_0}{2}}$, such that 
$$\mathcal E[\hat h^*]=0.$$
Moreover, $$\|\hat h-\hat h^*\|_\frac{\rho_0}{2}\le C_1(N^+,N^-,c,\tau,\nu,\iota,\rho_0,\|\partial_{\alpha}\widehat V\|_{\mathscr A_{\rho_0+\|\hat h\|_{\rho_0}+\iota}^2})\epsilon_0.$$
The solution $\hat h^*$ is the only solution of $\mathcal{E}[\hat h^*]=0$ with zero average for $\hat h^*$ in a ball centered at $\hat h$ in $\mathscr A_{\frac{3}{8}\rho_0}$, i.e. $\hat h^*$ is the unique solution in the set
$$\left\{ \hat g\in\mathscr A_{\frac{3}{8}\rho_0}: \langle \hat g\rangle=0, \|\hat g-\hat h\|_{\frac{3}{8}\rho_0}\le C_3(N^+,N^-,c,\tau,\nu,\iota,\rho_0,\|\partial_{\alpha}\widehat V\|_{\mathscr A_{\rho_0+\|\hat h\|_{\rho_0}+\iota}^2})  \right\}.$$
\end{thm}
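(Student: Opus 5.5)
The plan is to follow the quasi-Newton scheme with \emph{automatic reducibility} used for FK models by de la Llave and coauthors, transplanted to the infinite-dimensional torus $\mathbb T^{\mathbb N}$. Starting from an approximate solution $\hat h$ with error $e=\mathcal E[\hat h]$, we look for a correction $\Delta$ solving, up to a quadratically small remainder, the linearized equation $D\mathcal E[\hat h]\Delta=-e$. Here
$$D\mathcal E[\hat h]\Delta=\Delta\circ T_{\omega\alpha}+\Delta\circ T_{-\omega\alpha}-2\Delta+\partial^2_{\alpha}\widehat V(\sigma+\alpha\hat h)\,\Delta .$$

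The key algebraic step is to differentiate $\mathcal E[\hat h]$ along the direction $\alpha$, which is where the translation invariance $\hat h\mapsto\hat h_\beta$ enters. This shows that $\hat l=1+\partial_\alpha\hat h$ solves the homogeneous linearized equation up to the error $\partial_\alpha e$. Writing $\Delta=\hat l\,W$, we then obtain the factorization
$$\hat l\,D\mathcal E[\hat h](\hat l W)=\mathcal{D}_{-}\bigl[(\hat l\,\hat l\circ T_{\omega\alpha})(\mathcal{D}_{+}W)\bigr]+O(\partial_\alpha e)\,W ,$$
where $\mathcal{D}_+W=W\circ T_{\omega\alpha}-W$ and $\mathcal{D}_-F=F-F\circ T_{-\omega\alpha}$. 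Dropping the quadratically small term, the step reduces to two cohomological equations:
\begin{itemize}
\item first $\mathcal D_-F=-\hat l e$, whose solvability requires $\langle \hat l e\rangle=0$, which one checks holds up to quadratic error via the variational structure (integrate $\mathcal E[\hat h]\hat l$ over the torus, where it is a derivative);
\item then $\mathcal D_+W=(\hat l\,\hat l\circ T_{\omega\alpha})^{-1}(F+\bar F)$.
\end{itemize}
The free constant $\bar F$ is chosen so that the right-hand side of the second equation has zero average. This is exactly possible thanks to the nondegeneracy $|\langle 1/(\hat l\,\hat l\circ T_{-\omega\alpha})\rangle|\ge c$ in (H2), after a harmless shift. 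The average of $W$ is fixed to keep $\langle\hat h\rangle=0$.

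The analytic core is a Rüssmann-type estimate for $\mathcal D_\pm$ on the spaces $\mathscr A_\rho$. For $\hat g=\sum_k \hat g_k e^{ik\cdot\sigma}$ with zero mean, the solution has coefficients $\hat g_k/(e^{ik\cdot\omega\alpha}-1)$. By (H1) these are bounded by $\nu^{-1}\prod_j(1+\langle\langle j\rangle\rangle^{1+\tau}|k_j|^{1+\tau})|\hat g_k|$. We must show
$$\sup_{k}\, e^{-\delta|k|}\prod_j(1+\langle\langle j\rangle\rangle^{1+\tau}|k_j|^{1+\tau})\le \exp\bigl(C\delta^{-\gamma}\bigr)$$
for some $\gamma$, using the weighted norm $|k|=\sum_j\langle\langle j\rangle\rangle|k_j|$ implicit in $\mathscr A_\rho$. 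This is the main obstacle I expect, because the loss is not polynomial in $\delta^{-1}$ as in finite dimensions. I would handle it by splitting indices $j$ according to whether $\langle\langle j\rangle\rangle$ is below or above a threshold $\sim\delta^{-1}$, estimating $\log(1+x^{1+\tau})\le \epsilon x$ plus a constant for the large ones (in the style of Bourgain and Montalto–Procesi). The resulting subexponential loss is still beaten by quadratic convergence.

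With these tame estimates, each Newton step gives $\|\mathcal E[\hat h_{n+1}]\|_{\rho_{n+1}}\le C e^{C\delta_n^{-\gamma}}\|\mathcal E[\hat h_n]\|_{\rho_n}^2$ with $\rho_{n+1}=\rho_n-\delta_n$. I would take $\delta_n=\rho_0 2^{-n-2}$, so that the radii stay above $\rho_0/2$. We also need to check inductively that $N^\pm$, $c$ and the analyticity strip for $\partial_\alpha\widehat V$ (hence the margin $\iota$) deteriorate only by summable amounts; this defines $\epsilon^*$ and yields $\hat h^*$ together with the bound $\|\hat h-\hat h^*\|_{\rho_0/2}\le C_1\epsilon$. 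Finally, for uniqueness, let $\hat g$ be another zero-average solution in the ball. Applying the same factorization at $\hat h^*$ to $\hat g-\hat h^*$ gives $\|\hat g-\hat h^*\|\le C\|\hat g-\hat h^*\|^2$ in $\mathscr A_{3\rho_0/8}$, after one more loss of domain. Hence $\hat g=\hat h^*$ once $C_3$ is small.
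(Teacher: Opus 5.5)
\paragraph{Relation to the paper.} The paper does not prove this statement. It quotes Theorem 1 of the cited KAM reference, in which the equation is solved together with an extra parameter. It then notes that this parameter vanishes by the vanishing lemma, because $\partial_\alpha\widehat V$ is a derivative. Your scheme is the same automatic-reducibility quasi-Newton method. The difference is that you impose the compatibility condition of the first cohomological equation inside every step, instead of carrying an extra parameter and killing it at the end. That works. In fact $\langle \hat l\,\mathcal E[\hat h]\rangle=0$ holds \emph{exactly}, not just up to quadratic error: the discrete Laplacian terms cancel after a shift and an integration by parts, and the potential term is $\langle\partial_\alpha[\widehat V(\sigma+\alpha\hat h(\sigma))]\rangle=0$. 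The extra-parameter formulation buys a Newton step that never uses the gradient structure; that structure then enters only once.

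\paragraph{The gap: convergence of the iteration.} The genuine gap is at the point you yourself flagged as the main obstacle.
\begin{itemize}
\item \emph{Size of the loss.} With the weight $|k|=\sum_j\langle\langle j\rangle\rangle|k_j|$ you propose, your splitting gives a loss $\exp\bigl(C\delta^{-1}\log^2\delta^{-1}\bigr)$. This comes from roughly $\delta^{-1}\log\delta^{-1}$ small indices, each costing about $\log\delta^{-1}$. So $\gamma=1$ up to logarithms, and (H1) with this weight gives nothing better: take $k_j=1$ for all $\langle\langle j\rangle\rangle\le\delta^{-1}\log\delta^{-1}$.
\item \emph{No additive constants for large indices.} The number of nonzero $k_j$ is unbounded, so the large indices must be estimated with no additive constant. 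The constants can only sit on the small indices.
\item \emph{Your recursion does not close.} Set $a_n=-\log\|\mathcal E[\hat h_n]\|_{\rho_n}$. Your recursion only yields $2^{-n}a_n\ge a_0-\sum_{m<n}2^{-m-1}\bigl(C\delta_m^{-\gamma}+\log C\bigr)$, so convergence needs $\sum_m 2^{-m}\delta_m^{-\gamma}<\infty$. With your choice $\delta_m=\rho_0 2^{-m-2}$, this series is $\sum_m 2^{m(\gamma-1)}$ (times $m^2$ here), which diverges for $\gamma\ge1$. Concretely, the loss at step $n$ is about $\exp(C2^n n^2)$, while quadratic convergence only produces $\epsilon^{2^n}$, so the error bounds blow up.
\item \emph{The repair.} This is standard in infinite-dimensional KAM: let the strip losses decay polynomially. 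For example, take $\delta_n=c\rho_0(n+1)^{-2}$ with $c\le 3/\pi^2$, so that $\sum_n\delta_n\le\rho_0/2$. Then $\sum_n 2^{-n}\delta_n^{-1}\log^2\delta_n^{-1}<\infty$, and $\epsilon^*$ is chosen so that $a_0$ exceeds this sum. The inductive control of $N^\pm$, $c$ and the margin $\iota$ then goes through as you describe.
\end{itemize}

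\paragraph{A smaller point: uniqueness.} Inverting the two cohomological equations at $\hat h^*$ loses domain. What you actually get is $\|f\|_{\rho'}\le K\|f\|_{\rho''}^2$ with $\rho'<\rho''\le 3\rho_0/8$ and $f=\hat g-\hat h^*$. Because the two sides are measured on different strips, this is not yet of the form $x\le Cx^2$. To close it, use the log-convexity of $\rho\mapsto\|f\|_\rho$, which follows from H\"older on the weighted Fourier sums:
\begin{itemize}
\item Write $\rho''=t\rho'+(1-t)\tfrac38\rho_0$ with $t>\tfrac12$. Then $\|f\|_{\rho''}\le\|f\|_{\rho'}^{t}\|f\|_{3\rho_0/8}^{1-t}$.
\item If $f\neq0$, combining this with the previous estimate and using $\|f\|_{\rho'}\le\|f\|_{3\rho_0/8}$ gives $1\le K\|f\|_{3\rho_0/8}$.
\item This is impossible once $C_3<1/K$, so $f=0$.
\end{itemize}
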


\begin{rmk}
    Since $\partial_{\alpha}\widehat V$ is derivative, by vanishing lemma, the extra parameter in   \cite[Theorem 1]{an2024kamtheoryalmostperiodicequilibria} vanishes.
\end{rmk}

\begin{thm}\label{KAM equilibria}
    The equilibrium configuration obtained by Theorem \ref{sr} does not have a phonon gap.
\end{thm}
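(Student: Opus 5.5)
Differentiating the family of equilibria $u_n(\beta)=n\omega+\hat h^*(n\omega\alpha+\beta\alpha)+\beta$ in $\beta$ gives a bounded solution of the linearized equation. That solution is not in $\ell^2(\mathbb Z)$, so we then build a Weyl sequence from it. For the present Lagrangian the Hessian is
\[
(D^2\Phi(u)\xi)_n=-\xi_{n+1}-\xi_{n-1}+\bigl(2+V''(u_n)\bigr)\xi_n ,
\]
a bounded self-adjoint Schr\"odinger operator on $\ell^2(\mathbb Z)$. Since it is self-adjoint, to show $0\in\sigma(D^2\Phi(u))$ it suffices to produce $\xi^{(N)}\in\ell^2$ with $\|\xi^{(N)}\|=1$ and $\|D^2\Phi(u)\xi^{(N)}\|\to0$.

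\textbf{Step 1.} Set $\eta_n:=\partial_\beta u_n(\beta)|_{\beta=0}=1+\partial_\alpha\hat h^*(n\omega\alpha)=\hat l^*(n\omega\alpha)$. Differentiating $\mathcal E[\hat h^*_\beta]=0$ in $\beta$, which is legitimate since $\hat h^*\in\mathscr A_{\rho_0/2}$ is analytic and $\partial_\alpha\widehat V\in\mathscr A^2$, and evaluating at $\sigma=n\omega\alpha$, $\beta=0$, gives
\[
\eta_{n+1}+\eta_{n-1}-2\eta_n-V''(u_n)\eta_n=0 .
\]
The sign of the $V''$ term must be checked against \eqref{1}; either way, the result is exactly $(D^2\Phi(u)\eta)_n=0$ as a formal identity. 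Moreover $\eta$ is bounded above and below. The bound $|\eta_n|\le\|\hat l^*\|$ is immediate. For the lower bound, Theorem~\ref{sr} gives $\|\hat h-\hat h^*\|_{\rho_0/2}\le C_1\epsilon$; a Cauchy estimate on a slightly smaller strip bounds $\|\partial_\alpha(\hat h-\hat h^*)\|$, and (H2) then gives $\inf|\hat l^*|\ge (N^-)^{-1}-C\epsilon>0$ for $\epsilon$ small. Hence $c_0\le|\eta_n|\le C_0$ for all $n$.

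\textbf{Step 2.} Let $\chi$ be a Lipschitz cutoff on $\mathbb R$ equal to $1$ on $[-1,1]$ and supported in $[-2,2]$, and put $\xi^{(N)}_n=\chi(n/N)\eta_n$. Because $D^2\Phi(u)\eta=0$ and the operator is nearest-neighbour,
\[
(D^2\Phi(u)\xi^{(N)})_n=-\bigl(\chi(\tfrac{n+1}{N})-\chi(\tfrac nN)\bigr)\eta_{n+1}-\bigl(\chi(\tfrac{n-1}{N})-\chi(\tfrac nN)\bigr)\eta_{n-1}.
\]
Each difference is $O(1/N)$ and vanishes unless $N\le|n|\le 2N+1$. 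So $\|D^2\Phi(u)\xi^{(N)}\|^2\le C\,C_0^2 N\cdot N^{-2}=O(1/N)$. On the other hand, $\|\xi^{(N)}\|^2\ge\sum_{|n|\le N}\eta_n^2\ge 2N c_0^2$. Normalizing, the ratio is $O(N^{-1})\to0$, so $0$ is in the approximate point spectrum and $D^2\Phi(u)$ is not invertible. This proves there is no phonon gap, i.e.\ $G(u)=0$.

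\textbf{Main obstacle.} The delicate point is Step 1: justifying $\partial_\beta$ of the hull equation in the infinite-dimensional torus setting $\mathbb T^{\mathbb N}$ with the norms $\mathscr A_\rho$, and obtaining the uniform positive lower bound on $\hat l^*$ from (H2) and the closeness estimate. I would first use the Cauchy estimate for $\partial_\alpha$ on $\mathscr A_{\rho}$ from \cite{an2024kamtheoryalmostperiodicequilibria}. If that is awkward, a cleaner fallback avoids the lower bound entirely: it suffices that $\eta\not\equiv0$ and that $\eta$ is almost periodic. Then $\frac1N\sum_{|n|\le N}\eta_n^2\to\langle(\hat l^*)^2\rangle\ge\langle \hat l^*\rangle^2=1$, using $\langle\partial_\alpha\hat h^*\rangle=0$. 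This already gives $\|\xi^{(N)}\|^2\gtrsim N$, which is all Step 2 needs.
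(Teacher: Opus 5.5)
Your proposal is correct and follows essentially the paper's route: differentiate the sliding family $u(\beta)$ to get the bounded, nonvanishing zero-mode $\xi_n=\hat l^*(n\omega\alpha)$ of $D^2\Phi(u)$, then truncate it to obtain approximate eigenvectors for $0$. The paper differentiates $D\Phi(u(\beta))=0$ directly, which makes $D^2\Phi(u)\xi=0$ automatic and avoids the sign bookkeeping you flagged in \eqref{1}; it also uses a sharp cutoff, whose residual has four nonzero entries and stays $O(1)$, together with the dichotomy ``$\xi\in\ell^2$ (so $0$ is an eigenvalue) or $\|\xi^{[k]}\|_{\ell^2}\to\infty$'', whereas your Lipschitz cutoff makes the residual $O(N^{-1/2})$ and your lower bound on $|\eta_n|$ (or $\langle(\hat l^*)^2\rangle\ge\langle\hat l^*\rangle^2=1$) gives $\|\xi^{(N)}\|_{\ell^2}\gtrsim\sqrt N$ directly.
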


\begin{proof}
    The direct calculation yields
    $$(D^2\Phi(u)(\xi))_n=-\xi_{n-1}+(2+V''(u_n))\xi_n-\xi_{n+1}.$$
    Since $V'$ is an almost periodic function, it follows that $V''$ is bounded and $D^2\Phi(u)\in\mathscr L(\ell^2)$, where $\mathscr L(X)$ denotes the space of all bounded linear operators from $X$ to $X$.

    Next, we will prove that $D^2\Phi(u)$ is not invertible in $\mathscr L(\ell^2)$.

    Since for all $\beta\in\mathbb R$, $D\Phi(u(\beta))=0$, taking the derivative with respect to $\beta$, we obtain $$\frac{d}{d\beta}D\Phi(u(\beta))=D^2\Phi(u(\beta))((\frac{\partial u_n}{\partial\beta})_{n\in\mathbb Z})=0,$$ where $\frac{\partial u_n}{\partial \beta}=\partial_{\alpha}\hat h^*(n\omega\alpha+\beta\alpha) +1=\hat l^*(n\omega\alpha+\beta\alpha)$. $\hat l^*$ is defined in the KAM theorem, and it can be provided that the analytic norm of $\hat l^*$ is bounded. Therefore, $(\xi_n)_{n\in\mathbb Z}:=(\frac{\partial u_n}{\partial \beta})_{n\in\mathbb Z}\in \ell^{\infty}$ and $D^2\Phi(u)$ is not invertible in $\mathscr L(\ell^{\infty})$. 

   { Since $\|(\hat l^*(\sigma))^{-1}\|_{\frac{\rho_0}{2}}\le 2N^-$, we have $(\xi_n)_{n\in\mathbb Z}\ne 0$.} Since $D^2\Phi(u(\beta))((\xi_n)_{n\in\mathbb Z})=0$, if $(\xi_n)_{n\in\mathbb Z}\in l^2$, then $D^2\Phi(u)$ is not invertible in $\mathscr L(\ell^2)$. If $(\xi_n)_{n\in\mathbb Z}\in \ell^2$, that means $\|(\xi_n)_{n\in\mathbb Z}\|_{\ell^2}=+\infty$.

    Construct
$$ \xi_n^{[k]}:=\left\{
    \begin{aligned}
        \frac{\partial u_n}{\partial\beta}&,&|n|\le k,\\
        0\,\,\,\,&,&|n|>k.
    \end{aligned}
    \right.
$$
then $(\xi_n^{[k]})_{n\in\mathbb Z}\in l^2$. 
    The direct calculation yields
    $$
\eta_n^{[k]}:=(D^2\Phi(u)(\xi^{[k]}))_n=-\xi_{n-1}^{[k]}+(2+V''(u_n))\xi_n^{[k]}-\xi_{n+1}^{[k]}=\left\{
\begin{aligned}
    &-\xi_{n-1}+(2+V''(u_n))\xi_n,\quad n=k\\
    &(2+V''(u_n))\xi_n-\xi_{n+1},\quad\quad\, n=-k\\
    &-\xi_{n-1},\quad\quad\quad\quad\quad\quad\quad\quad\,\,\,\, n=k+1\\
    &-\xi_{n+1},\quad\quad\quad\quad\quad\quad\quad\quad\,\,\,\, n=-k-1\\
    &0,\quad\quad\quad\quad\quad\quad\quad\quad\,\,\,\,\quad\quad\quad\ else.
\end{aligned}
\right.
$$
 Note that there are only 4 terms in $\eta^{[k]}$ that are not zero. {Denote $A:=\sup_{\sigma\in\mathbb T^{\mathbb N}}|2+\partial_{\alpha}\partial_{\alpha}\widehat V(\sigma)|=\sup_{\theta\in\mathbb R}|2+V''(\theta)|$, $N^{*+}:=\sup_{\sigma\in\mathbb T^{\mathbb N}}|\hat l^*(\sigma)|$.} Then
 \begin{align*}
   \|\eta^{[k]}\|_{\ell^2}&=\sqrt{|-\xi_{k}|^2+|-\xi_{-k}|^2+|(2+V''(u_{-k}))\xi_{-k}-\xi_{-k+1}|^2+|-\xi_{k-1}+(2+V''(u_{k}))\xi_{k}|^2}\\
   &\le\sqrt{2(N^{*+})^2+2(AN^{*+}+N^{*+})^2}=:M,  
 \end{align*}
 where $M$ is independent of $k$.

 If $D^2\Phi(u)$ is invertible in $\mathscr L(\ell^2)$, then $\xi^{[k]}=(D^2\Phi(u))^{-1}\eta^{[k]}$.
 Moreover, we have
 \begin{equation*}
\begin{aligned}
    \|(D^2\Phi(u))^{-1}\|_{\mathscr L(\ell^2)}&=\sup_{\|\eta\|_{\ell^2}\le 1}\|(D^2\Phi(u))^{-1}\eta\|_{\ell^2}=\sup_{\|\eta\|_{\ell^2}\le M}\frac{\|(D^2\Phi(u))^{-1}\eta\|_{\ell^2}}{M}\\
    &\ge\frac{\|(D^2\Phi(u))^{-1}\eta^{[k]}\|_{\ell^2}}{M}=\frac{\|\xi^{[k]}\|_{\ell^2}}{M}.
\end{aligned}
\end{equation*}
Since $\|\xi^{[k]}\|_{\ell^2}\to\infty$ as $k\to\infty$, it follows that $\|(D^2\Phi(u))^{-1}\|_{\mathscr L(\ell^2)}$ is unbounded. Therefore, $D^2\Phi(u)$ is not invertible in the space of $\mathscr L(\ell^2)$. Consequently, the equilibrium configuration $u$ obtained by the KAM method does not have a phonon gap. 
\end{proof}

\section{Equilibria by anti-integrable limits with a phonon gap}
 In this section we aim to prove that the equilibrium configuration obtained using the anti-integrable limits has a  phonon gap.

 By anti-integrable limits, one can find the equilibrium configuration whose Lagrangian is given by 
 $$L(x,y)=I(x-y)+\lambda V(x), \quad x,y\in\mathbb R^d, \,I,V\in C^2(\mathbb R^d,\mathbb R).$$
 We begin by reviewing the anti-integrable limits theorem.

 \begin{defi}
    A continuous function $\psi:\mathbb R^d\to\mathbb R^d$ is said to satisfy Aubry criterion if there exists a subset $O\subset \mathbb R^d$ and constants $R>0$, $r>0$ and $m>0$, such that:
    \begin{itemize}
        \item [(1)] every ball with radius $R$ contains at least one point in $O$,
        \item [(2)] $\psi(z)=0$, $\forall z\in O$,
        \item [(3)] $\|\psi(x)-\psi(y)\|\ge m\|x-y\|$, $\forall z\in O$, $\forall x,y\in\overline{B}_r(z)$.
    \end{itemize}
 \end{defi}
 
 \begin{defi}
     Given a matrix $A$, we denote by $\sigma_{\max}(A)$ (resp. $\sigma_{\min}(A)$) the largest (resp. smallest) singular value of $A$.
 \end{defi}

 \begin{thm}[\cite{du2024anti}]\label{anti}
     Let $I,V\in C^2(\mathbb R^d,\mathbb R)$ and assume that $\nabla V$ satisfies the Aubry criterion with parameters $O,R,r,m$. If $\rho\in\rm{Hom}(\mathbb Z,\mathbb R^d)$, $\lambda\ge\frac{r+R}{rm}K(\rho,r+R)$, where $K(\rho,r+R):=4\sup_{x\in B_{\|\rho\|+2(R+r)}(0)}\sigma_{\max}(\nabla^2I(x))$, then there exists an equilibrium configuration $u$ satisfying 
     \begin{itemize}
         \item [(1)] $\sup_{n\in\mathbb Z}\inf_{z\in O}\|u_n-z\|\le r$ and $d(u,\rho)\le r+R$, 
         \item [(2)] $\sigma_{\max}(\nabla^2I(u_n-u_{n+1}))\le\frac{K(\rho,r+R)}{4}$, $\forall n\in\mathbb Z$,
         \item [(3)] $\sigma_{\min}(\nabla^2V(u_n))\ge m>0$, $\forall n\in\mathbb Z$. 
     \end{itemize}
     
     Moreover, the configuration $u$ is unique in the sense that if $u'$ is also an equilibrium configuration, and for all $i\in\mathbb Z$ there exists $x\in O$, such that $u_i,u_i'\in\overline{B}_r(x)$, then $u=u'$.
 \end{thm}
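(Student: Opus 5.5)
The plan is to realize the equilibrium as the fixed point of a contraction in a product of small balls around points of $O$ that shadow $\rho$. This is the standard anti-integrable mechanism. Write $\rho(n)=n\omega$. By property (1) of the Aubry criterion, for each $n\in\mathbb Z$ we choose $z_n\in O$ with $\|z_n-\rho(n)\|\le R$. The equilibrium equation $D_1L(u_n,u_{n+1})+D_2L(u_{n-1},u_n)=0$ reads
$$\lambda\nabla V(u_n)=F_n(u):=\nabla I(u_{n-1}-u_n)-\nabla I(u_n-u_{n+1}).$$
We work in the complete metric space $X=\{u\in(\mathbb R^d)^{\mathbb Z}:\|u_n-z_n\|\le r\ \forall n\}$ with the sup distance. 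On $X$ we define
$$(Tu)_n=\big(\nabla V|_{\overline B_r(z_n)}\big)^{-1}\big(\lambda^{-1}F_n(u)\big).$$

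\textbf{Step 1 (local inverse of $\nabla V$).} By (2)--(3) of the Aubry criterion, $\nabla V$ is injective on $\overline B_r(z_n)$, and any inverse is $1/m$-Lipschitz. We also need the image to contain $\overline B_{mr}(0)$. On the sphere $\partial B_r(z_n)$ we have $\|\nabla V(x)\|\ge m r$. Since $\nabla V$ is continuous and injective on the ball, invariance of domain (equivalently, a Brouwer degree argument: the map is homotopic to $x\mapsto x-z_n$ through maps not vanishing on the sphere, because injectivity gives degree $\pm1$) shows that every $y$ with $\|y\|<mr$ is attained in $B_r(z_n)$. Closedness then handles $\|y\|=mr$. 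I expect this topological step to be the main obstacle, since $\nabla V$ is only assumed continuous with an expansion property and no a priori nondegeneracy is given.

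\textbf{Step 2 ($T$ maps $X$ into $X$ and is a contraction).} For $u\in X$ we have $\|u_{n+1}-u_n\|\le\|\omega\|+2R+2r$, so all arguments of $\nabla I$ lie in $B_{\|\rho\|+2(R+r)}(0)$, where $\|\nabla^2I\|\le K/4$. The difference of the two arguments in $F_n$ is $u_{n-1}-2u_n+u_{n+1}$. Since $\rho(n-1)-2\rho(n)+\rho(n+1)=0$, this difference has norm at most $4R+4r$. Hence $\|F_n(u)\|\le K(R+r)$, and therefore $\lambda^{-1}\|F_n(u)\|\le rm$ by the hypothesis on $\lambda$. So $T$ is well defined with $Tu\in X$. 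For the contraction, $F_n$ depends on $u_{n-1},u_n,u_{n+1}$, and its Lipschitz constant in the sup norm is at most $4\cdot K/4=K$. Thus $\mathrm{Lip}(T)\le K/(\lambda m)\le r/(r+R)<1$. By Banach's fixed point theorem, $T$ has a unique fixed point $u$, which is an equilibrium.

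\textbf{Step 3 (properties).} Property (1) holds by construction: $\|u_n-z_n\|\le r$ and $\|u_n-\rho(n)\|\le r+R$. Property (2) follows because $u_n-u_{n+1}$ lies in the ball where the supremum defining $K$ is taken. For property (3), criterion (3) gives $\|\nabla V(x)-\nabla V(y)\|\ge m\|x-y\|$ near $u_n$. Differentiating along $y=x+tv$ with $t\to0$ yields $\|\nabla^2V(x)v\|\ge m\|v\|$ for $x$ in the open ball. This extends to the closed ball by continuity of $\nabla^2 V$, which gives $\sigma_{\min}(\nabla^2V(u_n))\ge m$.

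\textbf{Uniqueness.} Let $u'$ be an equilibrium with $u_i,u_i'\in\overline B_r(x_i)$ for some $x_i\in O$. Because the balls around distinct points of $O$ cannot both contain $u_i$ in a way incompatible with the Lipschitz estimate, we may take $x_i=z_i$ after re-choosing the $z_i$. Then $u'$ is also a fixed point of the same contraction on $X$, so $u'=u$. The estimates of Step 2 use only membership in $X$, provided $\|x_{i+1}-x_i\|$ stays in the same range, which needs a slightly more careful check of the constants.
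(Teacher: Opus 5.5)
The paper does not prove this theorem; it quotes it from the anti-integrable reference. So I am judging your argument on its own.

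\textbf{Existence and properties (1)--(3).} This part is sound and is the standard anti-integrable contraction. The estimates all check out: $\|u_n-u_{n+1}\|\le\|\rho\|+2(R+r)$, $\|u_{n-1}-2u_n+u_{n+1}\|\le 4(R+r)$, $\lambda^{-1}\|F_n(u)\|\le rm$, and $\mathrm{Lip}(T)\le K/(\lambda m)\le r/(r+R)<1$. Properties (1)--(3) then follow as you say. Step 1 is less delicate than you fear. Since $V\in C^2$, your Step 3 computation makes $\nabla V$ a local diffeomorphism on each open ball. Hence $\nabla V(B_r(z))\cap B_{mr}(0)$ is open, and by the expansion estimate it is relatively closed in $B_{mr}(0)$; it contains $0$, so it is all of $B_{mr}(0)$.

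\textbf{Uniqueness: the gap.} The hypothesis only gives some $x_i\in O$ with $u_i,u_i'\in\overline B_r(x_i)$, and you never show $x_i=z_i$. Your sentence about balls not both containing $u_i$ is an assertion, not an argument. Distinct points of $O$ are only known to be more than $r$ apart, so $\overline B_r(x_i)$ and $\overline B_r(z_i)$ can a priori intersect.

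Your fallback of re-centering the contraction at the $x_i$ fails:
\begin{itemize}
\item You only know $\|x_i-\rho(i)\|\le R+2r$.
\item So for $v\in\prod_i\overline B_r(x_i)$, the differences $v_n-v_{n+1}$ can reach norm $\|\rho\|+2R+6r$. This is outside the ball on which $\|\nabla^2 I\|\le K/4$.
\item The second differences can reach $4(R+3r)$.
\item Hence neither the self-map property nor the contraction estimate survives.
\end{itemize}
The same obstruction blocks a direct bound of $\lambda m\|u_i-u_i'\|$ by $\|F_i(u)-F_i(u')\|$, since $u'$ is not known to lie in the controlled region.

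\textbf{The missing idea.} Use the fixed-point equation to force $x_i=z_i$. From Step 2, $\|\nabla V(u_i)\|=\lambda^{-1}\|F_i(u)\|\le mr$. So the path $t\mapsto t\nabla V(u_i)$, $t\in[0,1]$, has two continuous lifts through $u_i$: its images under $(\nabla V|_{\overline B_r(z_i)})^{-1}$ and under $(\nabla V|_{\overline B_r(x_i)})^{-1}$. These end at $z_i$ and $x_i$ respectively. Now $\nabla^2V$ is nonsingular on both closed balls, so $\nabla V$ is injective near every point of these lifts. Hence the set of $t$ where the two lifts agree is open and closed in $[0,1]$ and contains $t=1$. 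Therefore $x_i=z_i$.

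There is a shortcut in a special case. If $\|\nabla V(u_i)\|\le mr/2$ (for instance if $\lambda$ is at least twice the threshold), the triangle inequality gives $\|x_i-z_i\|\le 2\|\nabla V(u_i)\|/m\le r$. Then injectivity of $\nabla V$ on $\overline B_r(z_i)$, with $\nabla V(x_i)=\nabla V(z_i)=0$, already forces $x_i=z_i$.

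Once $x_i=z_i$ for all $i$, you have $u'\in X$. Each $u'_i$ solves $\nabla V(u_i')=\lambda^{-1}F_i(u')$ inside $\overline B_r(z_i)$, so $u'=Tu'$. Uniqueness of the fixed point then gives $u'=u$.
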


 \begin{rmk}
     Actually, \cite{du2024anti} presents a more general case, where the equation for the equilibrium state is given by $(\Delta+\lambda\Psi)u=0$, where $\Delta$ is an invariant operator and $\Psi$ is a pointwise operator.
 \end{rmk}


 \begin{thm}\label{anti-integrable limits}
     Assume that {$\sup_{d(x,\rho)\le r+R}\sup_{n\in\mathbb Z}\sigma_{\max}(\nabla^2V(x_n)) \leq M$ for some $M>0$}, then the equilibrium configuration obtained by Theorem \ref{anti} has a phonon gap. Moreover, the gap parameter $G(u)\ge\frac{\lambda m-K(\rho,r+R)}{\lambda M+K(\rho,r+R)}$.
 \end{thm}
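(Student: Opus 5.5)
The plan is to write the Hessian as a sum of a dominant block-diagonal part and a small off-diagonal perturbation, and then bound the spectrum from both sides. For $L(x,y)=I(x-y)+\lambda V(x)$ we have $D_{11}L=\nabla^2 I(x-y)+\lambda\nabla^2V(x)$, $D_{22}L=\nabla^2I(x-y)$, and $D_{12}L=D_{21}L=-\nabla^2I(x-y)$. Hence, acting on $\ell^2(\mathbb Z,\mathbb R^d)$,
\begin{equation*}
D^2\Phi(u)=\lambda\mathcal V+\mathcal I,\qquad (\mathcal V\xi)_n=\nabla^2V(u_n)\xi_n,
\end{equation*}
\begin{equation*}
(\mathcal I\xi)_n=-B_{n-1}\xi_{n-1}+(B_n+B_{n-1})\xi_n-B_n\xi_{n+1},
\end{equation*}
where $B_n:=\nabla^2I(u_n-u_{n+1})$. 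Both operators are self-adjoint, since all the blocks are symmetric Hessians.

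\textbf{Step 1: bound on $\mathcal I$.} By Theorem \ref{anti}(2), $\|B_n\|\le K/4$ with $K=K(\rho,r+R)$. Each of the four terms in $\mathcal I$ (two off-diagonal, two diagonal) is a shift composed with a multiplication operator of norm at most $K/4$. The triangle inequality therefore gives $\|\mathcal I\|_{\mathscr L(\ell^2)}\le K$. This is exactly why the constant $4$ appears in $K$.

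\textbf{Step 2: bounds on $\mathcal V$.} By Theorem \ref{anti}(1), $d(u,\rho)\le r+R$, so the hypothesis gives $\|\nabla^2V(u_n)\|\le M$ for every $n$, and hence $\|\lambda\mathcal V\|\le\lambda M$. For the lower bound, Theorem \ref{anti}(3) gives $\sigma_{\min}(\nabla^2V(u_n))\ge m$. Each $\nabla^2 V(u_n)$ is symmetric, so its smallest singular value is $\min|\text{eigenvalue}|$. Therefore $\lambda\mathcal V$ is invertible, with $\|(\lambda\mathcal V)^{-1}\|\le(\lambda m)^{-1}$, because it is block diagonal with uniformly invertible blocks.

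\textbf{Step 3: Neumann series.} Write $D^2\Phi(u)=\lambda\mathcal V\,(\mathrm{Id}+(\lambda\mathcal V)^{-1}\mathcal I)$. We need $\lambda m>K$. This should follow from $\lambda\ge\frac{r+R}{rm}K$, since $\frac{r+R}{r}>1$, which gives $\lambda m\ge \frac{r+R}{r}K>K$ (for $K>0$; if $K=0$ the claim is trivial). Then $\|(\lambda\mathcal V)^{-1}\mathcal I\|\le K/(\lambda m)<1$, so the Neumann series yields invertibility with
\begin{equation*}
\|D^2\Phi(u)^{-1}\|\le\frac{1}{\lambda m}\cdot\frac{1}{1-K/(\lambda m)}=\frac{1}{\lambda m-K}.
\end{equation*}
This establishes the phonon gap and gives $\sigma_{\min}(D^2\Phi(u))\ge\lambda m-K$. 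Alternatively, for the self-adjoint operator one can argue that $\lambda\mathcal V$ has spectrum outside $(-\lambda m,\lambda m)$ and then use a norm-$K$ perturbation argument; the Neumann-series route already suffices.

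\textbf{Step 4: gap parameter.} We have $\sigma_{\max}(D^2\Phi(u))\le\|D^2\Phi(u)\|\le\lambda M+K$. Combining this with the remark after the gap-parameter definition gives $G(u)\ge\frac{\lambda m-K}{\lambda M+K}$.

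The main obstacle is the bookkeeping in Step 1, namely obtaining the constant $K$ rather than something larger for $\|\mathcal I\|$. A sharper route is to note that $\mathcal I=\nabla^*\mathcal B\nabla$ with $(\nabla\xi)_n=\xi_{n+1}-\xi_n$ and $\mathcal B=\mathrm{diag}(B_n)$, which gives $\|\mathcal I\|\le 4\cdot K/4$. The other point needing care is verifying the strict inequality $\lambda m>K$ from the hypothesis of Theorem \ref{anti}.
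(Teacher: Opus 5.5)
Your proposal is correct and is essentially the paper's proof: you split $D^2\Phi(u)=\mathcal I+\lambda\mathcal V$, bound $\|\mathcal I\|\le K$ and $\|(\lambda\mathcal V)^{-1}\|\le(\lambda m)^{-1}$, invert by a Neumann series, and combine $\|D^2\Phi(u)\|\le\lambda M+K$ with $\|D^2\Phi(u)^{-1}\|\le(\lambda m-K)^{-1}$ to bound $G(u)$. You also derive $\lambda m>K$ from $\lambda\ge\frac{r+R}{rm}K$ (this needs $\lambda>0$ in the degenerate case $K=0$), a step the paper only states as a condition without checking it against the hypothesis.
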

 \begin{proof}
     In this setting,
     $\Phi(u)=\sum_nL(u_n,u_{n+1})=\sum_n I(u_n-u_{n+1})+\lambda V(u_n)$.
     Then
    \begin{equation*}
    \begin{aligned}
        &\quad \big(D^2\Phi(u)(\xi) \big)_n\\
        &=\big(-\nabla^2I(u_{n-1}-u_n)\big)\xi_{n-1}+\big(\nabla^2I(u_n-u_{n+1})+\lambda\nabla^2V(u_n)+\nabla^2I(u_{n-1}-u_n)\big)\xi_n\\
        &\quad+ \big(-\nabla^2I(u_n-u_{n+1}) \big)\xi_{n+1}\\
        &= \big(-\nabla^2I(u_{n-1}-u_n) \big)\xi_{n-1}+ \big(\nabla^2I(u_n-u_{n+1})+\nabla^2I(u_{n-1}-u_n) \big)\xi_n+ \big(-\nabla^2I(u_n-u_{n+1})\big)\xi_{n+1}\\
        &\quad+\lambda\nabla^2V(u_n)\xi_n.
    \end{aligned}
    \end{equation*}
    We need to prove that $D^2\Phi(u)$ is invertible in the space $\mathscr L(\ell^2)$.

    Define the linear operator 
    $A:\ell^2\to \ell^2$ by
    $$(A(\xi))_n=(-\nabla^2I(u_{n-1}-u_n))\xi_{n-1}+(\nabla^2I(u_n-u_{n+1})+\nabla^2I(u_{n-1}-u_n))\xi_n+(-\nabla^2I(u_n-u_{n+1}))\xi_{n+1}.$$
    Since $\sigma_{\max}(\nabla^2I(u_n-u_{n+1}))\le\sup_{x\in B_{\|\rho\|+2(R+r)}(0)}\sigma_{\max}(\nabla^2I(x))$, $A$ is well defined. Therefore, we have $\|A\|_{\mathscr L(\ell^2)}\le 4\max_{\|x\|\le2(r+R)+\|\rho\|}\sigma_{\max}(\nabla^2I(x))=K(\rho,r+R)$.

    Define the linear operator $B:\ell^2\to \ell^2$, $(B(\xi))_n=\nabla^2 V(u_n)\xi_n$. Since the equilibrium $u$ obtained by anti-integrable limit satisfies $M\ge\sigma_{\max}(\nabla^2V(u_n)\ge\sigma_{\min}(\nabla^2V(u_n))\ge m>0$, for all $n\in\mathbb Z$, it follows that $B$ is also well defined. $B$ is invertible in $\mathscr L(\ell^2)$, and $$\|B^{-1}\|_{\mathscr L(\ell^2)}=\sup_{\|\eta\|_{\ell^2}=1}\|B^{-1}\eta\|_{\ell^2}=\sup_{\|\xi\|_{\ell^2}=1}\frac{\|B^{-1}B\xi\|_{\ell^2}}{\|B\xi\|_{\ell^2}}=\frac{1}{\inf_{\|\xi\|_{\ell^2}=1}\|B\xi\|_{\ell^2}}\le\frac1m,$$
    where $\|B\xi\|_{\ell^2}=(\sum_n\|\nabla^2V(u_n)\xi_n\|^2)^{\frac12}\ge(\sum_nm^2\|\xi_n\|^2)^{\frac12}=m\|\xi\|_{\ell^2}$.

    Therefore, $D^2\Phi(u)=A+\lambda B=\lambda B(I+\lambda^{-1}B^{-1}A)$, where $\|\lambda^{-1}B^{-1}A\|_{\mathscr L(\ell^2)}\le\frac1{m\lambda}K(\rho,r+R)$, and $D^2\Phi(u)$ is invertible, if $\lambda>\frac1m K(\rho,r+R)$. Consequently, the equilibrium configuration $u$ obtained by anti-integrable limit possesses a phonon gap. 

    Moreover, we have
    \begin{align*}
        \sigma_{\max}(D^2\Phi(u))\le\|D^2\Phi(u)\|_{\mathscr L(\ell^2)}\le\|A\|_{\mathscr L(\ell^2)}+\lambda\|B\|_{\mathscr L(\ell^2)}\le K(\rho,r+R)+\lambda M,
    \end{align*}
    and
    \begin{align*}
        \sigma_{\max}(D^2\Phi(u)^{-1})&\le\|D^2\Phi(u)^{-1}\|_{\mathscr L(\ell^2)}\le\lambda^{-1}\|B^{-1}\|_{\mathscr L(\ell^2)}\|(I+\lambda^{-1}B^{-1}A)^{-1}\|_{\mathscr L(\ell^2)}\\
        &\le\lambda^{-1}\frac1m \frac{1}{1-\frac{1}{m\lambda}K(\rho,r+R)}=\frac{1}{\lambda m-K(\rho,r+R)}.
    \end{align*}
    Then the gap parameter $G(u)=\frac{1}{\sigma_{\max}(D^2\Phi(u)^{-1})\sigma_{\max}(D^2\Phi(u))}\ge\frac{\lambda m-K(\rho,r+R)}{\lambda M+K(\rho,r+R)}$.
 \end{proof}

\bibliographystyle{alpha}
\bibliography{reference}

\end{document}